\def\tribar{\vert\thickspace\!\!\vert\thickspace\!\!\vert}
\def\Forall{\quad \hbox{for all }}
\begin{document}

\newcommand{\al}{\alpha}
\newcommand{\be}{\beta}
\newcommand{\om}{\omega}
\newcommand{\ga}{\gamma}
\newcommand{\wh}{\widehat}
\newcommand{\Ga}{\Gamma}
\newcommand{\ka}{\kappa}
\newcommand{\teps}{\tilde e}
\newcommand{\fy}{\varphi}
\newcommand{\Om}{\Omega}
\newcommand{\si}{\sigma}
\newcommand{\Si}{\Sigma}
\newcommand{\de}{\delta}
\newcommand{\De}{\Delta}
\newcommand{\la}{\lambda}
\newcommand{\La}{\Lambda}
\newcommand{\ep}{\epsilon}

\newcommand{\vth}{\vartheta}
\newcommand{\vtht}{{\widetilde \vartheta}}
\newcommand{\rh}{\varrho}
\newcommand{\rlh}{{\widetilde \varrho}}

\newcommand{\on}{\quad\text{on}\ }
\newcommand{\orr}{\quad\text{or}\ }
\newcommand{\inn}{\quad\text{in}\ }
\newcommand{\as}{\quad\text{as}\ }
\newcommand{\at}{\quad\text{at}\ }
\newcommand{\ifff}{\quad\text{if}\ }
\newcommand{\andy}{\quad\text{and}\ }
\newcommand{\with}{\quad\text{with}\ }
\newcommand{\when}{\quad\text{when}\ }
\newcommand{\where}{\quad\text{where}\ }
\newcommand{\FEM}{\text{finite element method }}
\newcommand{\FE}{\text{finite element }}
\def\tribar{\vert\thickspace\!\!\vert\thickspace\!\!\vert}

\def\Mitaga1{{E_{\al,1}}}
\def\Mitagaa{{E_{\al,\al}}}
\def\C{{\mathbb{C}}}
\def\V{{H^1_0(\Omega)}}

\def\NN{{N}}
\def\Etilh{{\bar{E}_h}}

%
\def\guh{{\underline u}_h}    
\def\guht{{\underline u}_{h,t}}    
\def\puht{u_{h,t}}                 
\def\puh{\widetilde{u}_h}                 
\def\luh{{\bar u}_h}     
\def\luht{{\bar u}_{h,t}}
\def\luhtt{{\bar u}_{h,tt}}
\def\p{q}
\def\q{\luht}
\def\luhp{\luht}
\def\L2o{{L_2(\Om)}}
\def\K{\tau}
\def\zK{z^\K}
\def\T{{\mathcal{T}}}
\def\theto{{\overline \theta}}
\def\thetu{{\underline \theta}}
\def\ue{{\underline e}}
\def\oe{{\overline e}}
\def\ee{e}
\def\Lah{{\Lambda}_h}
\def\psiu{{\psi_t}}
\def\ww{w}
\newcommand{\EE}{\tilde e}

\def\Dal{{\partial^\alpha_t}}
\def\Dt{{\partial_t}}
\def\Dalpl{{\partial^{\alpha+l}_t}}
\def\gamal{{\gamma}_\alpha}

\def\bDelh{{\bar{\Delta}_h}}
\def\Ftilh{\bar{E}_h}
\def\Pbarh{\bar{P}_h}
\def\Ebar{{F}}
\def\Ellipt{{\mathcal L}}


\numberwithin{equation}{section}
\newtheorem{theorem}{Theorem}[section]
\newtheorem{lemma}{Lemma}[section]
\newtheorem{corollary}{Corollary}[section]
\newtheorem{remark}[theorem]{Remark}
\newtheorem{proposition}{Proposition}[section]
\newtheorem{definition}{Definition}[section]

\newtheorem{assumption}{Assumption}
\renewcommand{\theassumption}{\Alph{assumption}}

\graphicspath{{../paper_weakdata/}}

\title[Galerkin FEM for fractional PDE's with non-smooth data]
{Galerkin FEM for fractional order parabolic \\
equations with initial data in $H^{-s},~0 < s \le 1$}

\author {Bangti Jin \and Raytcho Lazarov \and Joseph Pasciak \and Zhi Zhou} 

\address{
Mathematics, Texas A\&M University,
College Station, TX 77843, USA}
\thanks{The research of R. Lazarov and Z. Zhou 
was supported in parts by US NSF Grant
DMS-1016525 and J. Pasciak has been supported by NSF Grant 
DMS-1216551. The work of all authors has been  supported also  by Award No.
KUS-C1-016-04, made by King Abdullah University of
Science and Technology (KAUST)}

\keywords{finite element method, fractional diffusion equation, error estimates, semidiscrete discretization}

\subjclass {65M60, 65N30, 65N15}

\maketitle

\begin{abstract}
We investigate semi-discrete numerical schemes based on the standard Galerkin
and lumped mass Galerkin finite element methods
for an initial-boundary value problem for homogeneous fractional
diffusion problems with non-smooth initial data.
We assume that $\Omega\subset \mathbb{R}^d$, $d=1,2,3$ is a convex polygonal (polyhedral) domain.
We theoretically justify optimal order error estimates in $L_2$- and $H^1$-norms
for initial data in $H^{-s}(\Omega),~0\le s \le 1$.
We confirm our theoretical findings with a number of numerical tests
that include initial data $v$ being a Dirac $\delta$-function supported on a $(d-1)$-dimensional manifold.

\end{abstract}

\section{Introduction}\label{sec:intro}

We consider the initial--boundary value problem for the fractional order parabolic
differential equation for $u(x,t)$:
\begin{alignat}{3}\label{eq1}
   \Dal u(x,t) + \Ellipt u(x,t)&= f(x,t),&&\quad x\text{ in  } \Omega&&\quad T \ge t > 0,\notag\\
   u(x,t)&=0,&&\quad x\text{ in}\  \partial\Omega&&\quad T \ge t > 0,\\
    u(x,0)&=v(x),&&\quad x\text{ in  }\Omega,&&\notag
\end{alignat}
where $\Omega\subset\mathbb{R}^d\, (d=1,2,3)$ is a bounded convex polygonal domain with a boundary
$\partial\Omega$, and $\Ellipt$ is a symmetric,
uniformly elliptic second-order differential
operator.  Integrating the second order derivatives by parts (once)
gives rise to a bilinear form $a(\cdot,\cdot)$ satisfying
$$a(v,w) = (\Ellipt v,w) \Forall v\in H^2(\Omega),w\in H^1_0(\Omega),$$
where $(\cdot,\cdot)$ denotes the inner product in $L_2(\Omega)$.
The form $a(\cdot,\cdot)$ extends continuously to
$H^1_0(\Omega) \times H^1_0(\Omega)$ where it is symmetric and coercive
and we take $\|u\|_{H^1} = a(u,u)^{1/2}$, for all $u\in H^1_0(\Omega)$.
Similarly, $\Ellipt $ extends continuously to an operator
from $ H^1_0(\Omega) $ to $H^{-1}(\Omega)$ (the set of bounded linear functionals on
$H^1_0(\Omega)$) by
\begin{equation}\label{form-a}
\langle \Ellipt u, v\rangle = a(u, v) \Forall u,v\in H^1_0(\Omega).
\end{equation}
Here $\langle\cdot,\cdot\rangle$ denotes duality pairing between
$H^{-1}(\Omega)$ and $ H^1_0(\Omega) $.   We assume that the
coefficients of $\Ellipt$ are smooth enough so that solutions $v\in
H^1_0(\Omega) $ satisfying
$$a(v,\phi)= (f,\phi)\Forall \phi \in H^1_0(\Omega)$$
with $f\in L_2(\Omega)$ are in $H^2(\Omega)$.

Here  $\Dal u$ ($0<\al<1$) denotes the left-sided Caputo fractional derivative of order
$\al$ with respect to $t$ and it is defined by
(cf. \cite[p.\,91]{KilbasSrivastavaTrujillo:2006}
or \cite[p.\,78]{Podlubny_book})
\begin{equation*}
   \Dal v(t)= \frac{1}{\Gamma(1-\al)} \int_0^t(t-\tau)^{-\al}\frac{d}{d\tau}v(\tau)\, d\tau,
\end{equation*}
where $\Gamma(\cdot)$ is the Gamma function. Note that as the fractional order $\al$ tends to unity,
the fractional derivative $\Dal u$ converges to the canonical first-order derivative
$\frac{du}{dt}$ \cite{KilbasSrivastavaTrujillo:2006}, and thus
\eqref{eq1} reproduces the standard parabolic equation.
The model \eqref{eq1} captures
well the dynamics of subdiffusion processes in which the mean
square variance grows slower than that in a Gaussian process \cite{BouGeo} and has
found a number of practical applications.
A comprehensive survey on fractional order differential equations arising
in viscoelasticity, dynamical systems in control theory,
electrical circuits with fractance, generalized voltage divider,
fractional-order multipoles in electromagnetism, electrochemistry,
 and model of neurons is provided in \cite{Debnath_2003}; see also \cite{Podlubny_book}.

The goal of this study is to develop, justify, and test a numerical technique for
solving \eqref{eq1} with non-smooth initial data $v\in H^{-s}(\Omega)$, $0\leq s\leq1$, a important case 
in various applications and typical in related inverse problems; see e.g.,
\cite{ChengNakagawaYamamoto:2009}, \cite[Problem (4.12)]{Sakamoto_2011} and
\cite{JinLu:2012,KeungZou:1998}. This includes the case of
$v$ being a delta-function supported on a $(d-1)$--dimensional manifold in $\mathbb R^d$, is
particularly interesting from both theoretical and practical points of view.

The weak form for problem \eqref{eq1} reads: find $ u (t)\in \V$ such that
\begin{equation}\label{weak}
\begin{split}
{( \Dal u,\chi)}+ a(u,\chi)&= {(f, \chi)},
\quad \forall \chi\in \V,\ T\ge t>0, \quad
u(0)=v.
\end{split}
\end{equation}
The folowing two results are known, cf. \cite{Sakamoto_2011}: 
(1) for $v \in L_2(\Omega)$ the problem \eqref{eq1} has a unique solution in $C([0,T];L_2(\Omega) \cap C((0,T];H^2(\Omega)\cap
H^1_0(\Omega))$ \cite[Theorem 2.1]{Sakamoto_2011};
(2) for $f\in L_\infty(0,T;L_2(\Omega))$,
problem \eqref{eq1} has a unique solution in $L_2(0,T;H^2(\Omega)\cap
H^1_0(\Omega))$ \cite[Theorem 2.2]{Sakamoto_2011}.

To introduce the semidiscrete FEM for problem \eqref{eq1} we follow standard notation in \cite{Thomee97}.
Let ${\{\T_h\}}_{0<h<1}$ be a family of regular partitions of the domain
$\Omega$ into $d$-simplexes, called finite elements,
with $h$ denoting the maximum diameter. Throughout, we assume that the triangulation $\T_h$ is
quasi-uniform, i.e., the diameter of the inscribed disk in the finite element
$\tau \in \T_h$ is bounded from below by $h$, uniformly on $\T_h$. The approximation
 $u_h$ will be sought in the finite element space $X_h\equiv
X_h(\Om)$ of continuous piecewise linear functions over $\T_h $:
\begin{equation*}
  X_h =\left\{\chi\in H^1_0(\Om): \ \chi ~~\mbox{is a linear function over}  ~~\K,  
 \,\,\,\,\forall \K \in \T_h\right\}.
\end{equation*}

The semidiscrete Galerkin FEM for problem \eqref{eq1}
is: find $ u_h (t)\in X_h$ such that
\begin{equation}\label{fem}
\begin{split}
 {( \Dal u_{h},\chi)}+ a(u_h,\chi)&= {(f, \chi)},
\quad \forall \chi\in X_h,\ T \ge t >0, \quad
u_h(0)=v_h,
\end{split}
\end{equation}
where $v_h \in X_h$ is an approximation of $v$. The choice of $v_h$
will depend on the smoothness of $v$. For smooth data, $v \in H^2(\Omega)\cap H_0^1(\Omega)$,
we can choose $v_h$ to be either the finite element interpolant or the Ritz projection $R_h v$
onto $X_h$. In the case of non-smooth data, $v\in L_2(\Omega)$, following Thom\'ee
\cite{Thomee97}, we shall take $v_h=P_hv$,
where $P_h$ is the  $L_2$-orthogonal
projection operator $P_h: L_2(\Omega) \to X_h$,
defined by $(P_h \phi,\chi) = (\phi,\chi)$, $\chi \in X_h$.
In the intermediate case, $v \in H^1_0(\Omega)$,  we can choose either $v_h=P_hv$ or $v_h=R_hv$.
The goal of this paper is to study the convergence rates of the semidiscrete Galerkin method \eqref{fem}
for initial data $v\in H^{-s}(\Omega)$, $0 \le s \le 1$ when $f=0$.

The rest of the paper is organized as follows. In Section \ref{sec:prelim} we briefly review
the regularity theory for problem \eqref{eq1}.
In Section \ref{sec:Galerkin} we motivate our study by considering a 1-D example with
initial data being a $\delta$--function. Then in Theorem \ref{Galerkin} we prove the main result:
for $0 \le s \le 1$, the following error bound holds
$$
\|u(t) -u_h(t)\| + h \|\nabla (u(t) -u_h(t))\|   \le C h^{2-s} t^{-\al}\ell_h \|v\|_{-s}, \quad \ell_h =|\ln h| .
$$
Further, in Section \ref{sec:Lumpedmass} we show a similar result for the
lumped mass Galerkin method. 
Finally, in Section \ref{sec:numerics} we present numerical results for test problems
with smooth, intermediate, non-smooth initial data and initial data that
is a $\delta$--function, all confirming our theoretical findings.


\section{Preliminaries}\label{sec:prelim}


For the existence and regularity of the solution to \eqref{eq1},
we need some notation and preliminary results.
For $s\ge -1$, we denote by $\dot H^s(\Om) \subset
H^{-1}(\Om)$ the Hilbert space induced by the norm
\begin{equation}\label{norm-s}
|v|_s^2=\sum_{j=1}^\infty\la_j^s\langle v,\fy_j\rangle^2 
\end{equation}
with
$\{\la_j\}_{j=1}^\infty$ and $\{\fy_j\}_{j=1}^\infty$ being respectively the Dirichlet eigenvalues and
the $L_2$-orthonormal eigenfunctions of $\Ellipt$.  As usual, we
identify functions $f$ in $L_2(\Omega)$ with the functional
$F$ in $H^{-1}(\Omega)$ defined by $\langle F,\phi\rangle = (f,\phi)$,
for all $\phi\in H^1_0(\Omega)$.
The set $\{\fy_j\}_{j=1}^\infty$, respectively, $\{\la_j^\frac{1}{2} \fy_j\}_{j=1}^\infty$,
forms an orthonormal basis in $L_2(\Omega)$, respectively,
$H^{-1}(\Omega)$. Thus $|v|_0=\|v\|=(v,v)^\frac{1}{2}$ is the norm
in $L_2(\Omega)$ and $|v|_{-1}= \|v\|_{H^{-1}(\Omega)}$ is the norm in
$H^{-1}(\Omega)$. It is easy to check that
$|v|_1=a(v,v)^{\frac{1}{2}}$ is also the norm in $H_0^1(\Omega)$. Note that
$\{\dot H^s(\Omega)\}$, $s\ge -1$ form a Hilbert scale of interpolation spaces.
Motivated by this, we denote $\|\cdot\|_{H^s}$ to be the norm on the
interpolation scale between $H^1_0(\Omega)$ and $L_2(\Omega)$ when $s$
is in $[0,1]$ and $\|\cdot\|_{H^{s}}$ to be the norm on the
interpolation scale between  $L_2(\Omega)$ and $H^{-1}(\Omega)$ when $s$
is in $[-1,0]$.  Thus, $\| \cdot \|_{H^s}$ and $|\cdot|_s$ provide equivalent
norms for $s\in [-1,1]$.

We further assume that the coefficients of the elliptic operator $\Ellipt$ are sufficiently smooth
and the polygonal domain $\Omega$ is convex, so that $|v|_2=\|\Ellipt v\|$ is
equivalent to the norm in $H^2(\Om)\cap H^1_0(\Om)$ (cf. the proof of
Lemma 3.1 of \cite{Thomee97}).

Now we introduce the operator $E(t)$ by
\begin{equation}\label{E-oper}
 E(t)v=\sum_{j=1}^\infty \Mitaga1(-\la_j t^\al) \, (v, \fy_j) \, \fy_j,
\mbox{   where    }
E_{\al,\beta}(z)= \sum_{k=0}^\infty \frac{z^k}{\Gamma(k\alpha+\beta)}.
\end{equation}
Here $E_{\al,\beta}(z)$
is the Mittag-Leffler function defined for $ z\in\mathbb{C}$ \cite{KilbasSrivastavaTrujillo:2006}.
The operator $E(t)$ gives a representation of the solution $u$ of \eqref{eq1} with a homogeneous
right hand side, so that for $f(x,t) \equiv 0$
we have $u(t)=E(t)v$. This representation follows from eigenfunction expansion
\cite{Sakamoto_2011}. Further, we introduce the operator $\bar{E}(t)$ defined for $\chi \in L_2(\Omega)$ as
\begin{equation}\label{Duhamel}
{\bar E}(t) \chi = \sum_{j=0}^\infty t^{\al-1} \Mitagaa(-\la_j t^\al)\,(\chi,\fy_j)\, \fy_j.
\end{equation}
The operators $E(t) $ and $ {\bar E}(t) $ together give the following representation of the solution of \eqref{eq1}:
\begin{equation}\label{represent}
u(t)=E(t)v + \int_0^t  {\bar E}(t-s) f(s) ds.
\end{equation}

Motivated by \cite{ChengNakagawaYamamoto:2009,Sakamoto_2011}, we will study the convergence of semidiscrete Galerkin methods for problem \eqref{eq1}
with very weak initial data, i.e., $v \in H^{-s}(\Omega)$, $0\le s \le 1$. Then the
following question arises naturally: in what sense should we understand the solution for such weak data?
Obviously, for any $t>0$ the function $u(t)=E(t)v$ satisfies equation \eqref{eq1}.
Moreover, by dominated convergence we have 
\begin{equation*}
\lim_{t \to 0+} | E(t)v -v |_{-s} =
\Big (\lim_{t \to 0+} \sum_{j=1}^\infty (\Mitaga1(-\la_j t^\al) -1)^2 \la_j^{-s} (v,\fy_j)^2 \Big )^\frac12=0
\end{equation*}
provided that $v \in H^{-s}(\Omega)$. Here $(v,\fy_j)= \langle v, \fy_j \rangle_{H^{-s},H^s} $
is well defined since $\fy_j \in H^1_0(\Omega)$.
Therefore, the function $u(t)=E(t) v  $
satisfies \eqref{eq1} and for $t \to 0$ it converges to $v$ in $H^{-s}$--norm. That is,
it is a weak solution to \eqref{eq1}; see also \cite[Proposition 2.1]{ChengNakagawaYamamoto:2009}.

For the solution of the homogeneous equation \eqref{eq1}, which is the object of our
study, we have the following stability and smoothing estimates.
\begin{theorem}\label{thm:fdereg}
Let $u(t)=E(t)v$ be the solution to problem \eqref{eq1} with $f\equiv 0$. Then for $t>0$ we have the
the following estimates:
\begin{itemize}
  \item[(a)] for $\ell=0$, $ 0 \le q \le p \le 2$ and for $\ell=1$, $0 \le p \le q \le 2$ and $q \le p+2$:
      \begin{equation}\label{smooth-h-2}
         |(\Dal)^\ell u(t) |_p \le Ct^{-\al(\ell+\frac{p-q}{2})}|v|_q,
     \end{equation}
  \item[(b)] for $  0 \le s \le  1$ and $0\le p+s \le 2 $
    \begin{equation}
         | \Dal u(t) |_{-s} \le Ct^{-\al}|v|_{-s}, \quad \mbox{and} \quad
         | u(t) |_p \le Ct^{-\frac{p+s}{2}\al}|v|_{-s}.
     \end{equation}
\end{itemize}
\end{theorem}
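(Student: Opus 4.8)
The plan is to reduce every assertion to the eigenfunction representation \eqref{E-oper} together with a single scalar decay estimate for the Mittag-Leffler function. Expanding $v=\sum_j(v,\fy_j)\fy_j$, each left-hand side becomes a weighted series in $(v,\fy_j)^2$ against the Dirichlet eigenvalues $\la_j$, and the whole theorem will follow from the classical bound
$$|\Mitaga1(-x)|\le \frac{C}{1+x},\quad x\ge 0,\ 0<\al<1,$$
valid because $\arg(-x)=\pi$ lies in the sector where $E_{\al,1}$ decays like $1/x$ (see \cite{Podlubny_book,KilbasSrivastavaTrujillo:2006}). The only other input I need is the elementary observation that $\sup_{x\ge 0} x^r/(1+x)^2<\infty$ precisely when $0\le r\le 2$: the origin is harmless as soon as $r\ge 0$ and $x\to\infty$ is harmless as soon as $r\le 2$.

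For part (a) with $\ell=0$ I would write $|u(t)|_p^2=\sum_j \la_j^p\,\Mitaga1(-\la_j t^\al)^2(v,\fy_j)^2$ and factor out $\la_j^q$, reducing the claim to the pointwise inequality $\la_j^{\,p-q}\Mitaga1(-\la_j t^\al)^2\le C t^{-\al(p-q)}$. Substituting $x=\la_j t^\al$ turns this into $x^{p-q}/(1+x)^2\le C$, which holds uniformly because the hypotheses $0\le q\le p\le 2$ force $0\le p-q\le 2$. Summing against $|v|_q^2$ gives the bound. For $\ell=1$ I would first record the scalar identity $\Dal\,\Mitaga1(-\la_j t^\al)=-\la_j\,\Mitaga1(-\la_j t^\al)$, expressing that $E_{\al,1}(-\la_j t^\al)$ solves the fractional relaxation equation; differentiating \eqref{E-oper} termwise then gives $\Dal u(t)=-\sum_j \la_j\,\Mitaga1(-\la_j t^\al)(v,\fy_j)\fy_j$, hence $|\Dal u(t)|_p^2=\sum_j \la_j^{\,p+2}\Mitaga1(-\la_j t^\al)^2(v,\fy_j)^2$. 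The same reduction now yields the exponent $r=p+2-q$, and the two extra hypotheses $p\le q$ and $q\le p+2$ are exactly what force $0\le r\le 2$. Since squaring the target rate gives $t^{-\al(2\ell+p-q)}=t^{-\al r}$, both cases are covered at once.

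Part (b) is the same computation in the negative-norm scale, where $(v,\fy_j)$ is read as the duality pairing $\langle v,\fy_j\rangle$ (well defined for $v\in H^{-s}$ since $\fy_j\in H^1_0(\Om)$) and $|v|_{-s}^2=\sum_j\la_j^{-s}(v,\fy_j)^2$. For the smoothing estimate I factor $\la_j^{-s}$ out of $|u(t)|_p^2$, which leaves the exponent $r=p+s$, so that $0\le p+s\le 2$ is exactly the admissibility condition for the scalar bound. For the first estimate I use $\Dal u$ as above and factor $\la_j^{-s}$ out of $\sum_j \la_j^{\,2-s}\Mitaga1(-\la_j t^\al)^2(v,\fy_j)^2$, leaving $r=2$, i.e.\ $x^2/(1+x)^2\le 1$, which requires nothing beyond $0\le s\le 1$.

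The genuinely delicate points I anticipate are two, and neither touches the exponents. First, the termwise application of the nonlocal operator $\Dal$ to the series \eqref{E-oper} must be justified; I would do this on finite partial sums, which converge in the relevant $\dot H^p$ norm by the estimates just derived, and then pass to the limit. Second, the duality interpretation and the convergence of the series for data merely in $H^{-s}$ need to be made precise. The actual content of the theorem is thus the single elementary inequality $\sup_{x\ge0}x^r/(1+x)^2<\infty$ for $0\le r\le 2$: once the admissible ranges of $p,q,s,\ell$ are translated into the constraint $0\le r\le 2$, all the stated smoothing rates fall out simultaneously.
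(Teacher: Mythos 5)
Your proposal is correct and uses essentially the same method as the paper: for part (b) the paper gives exactly your computation---spectral expansion plus the bounds $|E_{\alpha,1}(-x)|\le C/(1+x)$ and $\sup_{x\ge 0}x^{r}/(1+x)^{2}<\infty$ for $0\le r\le 2$, together with the identity $\partial_t^\alpha E_{\alpha,1}(-\lambda t^\alpha)=-\lambda E_{\alpha,1}(-\lambda t^\alpha)$ for the fractional-derivative estimate. The only difference is that for part (a) the paper simply cites Sakamoto--Yamamoto and Jin--Lazarov--Zhou, whereas you carry out the same reduction (with exponents $r=p-q$ and $r=p+2-q$, which the stated hypotheses place in $[0,2]$) directly, making the argument self-contained; your remarks on justifying termwise application of $\partial_t^\alpha$ via partial sums address a point the paper passes over silently.
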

\begin{proof}
Part (a) can be found in \cite[Theorem 2.1]{Sakamoto_2011} and
\cite[Theorem 2.1]{Bangti_LZ_2012}. Hence we only show part (b). Note that for $t>0$,
\begin{equation*}
\begin{split}
  | u(t) |_p^2 &\le \sum_{j=0}^{\infty} \la_j^p |\Mitaga1(-\la_j t^\al)|^2
               |(v,\phi_j)|^2
               \le C\sum_{j=0}^{\infty}  \frac{\la_j^p}{(1+\la_j t^\al)^2}
               |(v,\phi_j)|^2 \\
               &\le C t^{-(p+s)\al} \sum_{j=0}^{\infty}\frac{(\la_j t^\al)^{p+s}}{(1+\la_j t^\al)^2} \la_j^s
               |(v,\phi_j)|^2 \\
               &\le C t^{-(p+s)\al} \sum_{j=0}^{\infty} \la_j^s
               |(v,\phi_j)|^2 = Ct^{-(p+s)\al}|v|_{-s}^2,  \\
\end{split}
\end{equation*}
which proves the second inequality of case (b).  The first estimate follows similarly
by noticing the identity $\Dal E_{\alpha,1}(-\lambda t^\alpha) =
-\lambda E_{\alpha,1}(-\lambda t^\alpha)$ \cite{KilbasSrivastavaTrujillo:2006}.
\end{proof}

We shall need some properties of the $L_2$-projection $P_h$ onto $X_h$.
\begin{lemma}\label{lem:Ph-0stable}
Assume that the mesh is quasi--uniform.  Then for $s\in [0,1]$,
$$\begin{aligned}
\|(I-P_h)w \|_{H^s} &\le Ch^{2-s} \|w\|_{H^2}, \Forall w\in
H^2(\Omega)\cap H^1_0(\Omega),
\end{aligned}
$$
and
$$
\begin{aligned}
\|(I-P_h)w \|_{H^s} &\le Ch^{1-s} \|w\|_{H^1}, \Forall w\in
H^1_0(\Omega).
\end{aligned}
$$
In addition, $P_h$ is stable on $H^s(\Omega)$ for $s\in [-1,0]$.
\end{lemma}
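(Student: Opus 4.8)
The plan is to establish the three claims by combining (i) the best-approximation property of the $L_2$-projection with standard interpolation-error estimates, (ii) the $H^1$-stability of $P_h$ on quasi-uniform meshes, and then (iii) interpolation between the endpoint cases $s=0$ and $s=1$, and finally (iv) a duality argument for the negative-exponent stability.

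First I would dispose of the endpoint $s=0$. Since $P_h$ is the $L_2$-orthogonal projection, $\|(I-P_h)w\| = \min_{\chi\in X_h}\|w-\chi\| \le \|w-I_hw\|$, where $I_h$ denotes the Lagrange interpolant (or a Cl\'ement/Scott--Zhang quasi-interpolant for merely $H^1$ data). Standard approximation theory on quasi-uniform meshes then gives $\|w-I_hw\|\le Ch^2\|w\|_{H^2}$ for $w\in H^2(\Om)\cap H^1_0(\Om)$ and $\|w-I_hw\|\le Ch\|w\|_{H^1}$ for $w\in H^1_0(\Om)$, which are exactly the $s=0$ versions of the two stated inequalities.

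The \emph{key} step is the endpoint $s=1$, which rests on the $H^1$-stability of $P_h$, namely $\|\nabla P_hw\|\le C\|\nabla w\|$ for $w\in H^1_0(\Om)$; this is the only place where quasi-uniformity is essential. I would prove it by comparison with the Ritz projection $R_h$: writing $\nabla P_hw = \nabla(P_hw-R_hw)+\nabla R_hw$, the Ritz projection is $H^1$-stable by construction, while the inverse inequality $\|\nabla\chi\|\le Ch^{-1}\|\chi\|$ (valid for $\chi\in X_h$ precisely because the mesh is quasi-uniform) controls the difference,
\[
\|\nabla(P_hw-R_hw)\| \le Ch^{-1}\|P_hw-R_hw\| \le Ch^{-1}\bigl(\|w-P_hw\|+\|w-R_hw\|\bigr) \le C\|w\|_{H^1},
\]
using the $O(h)$ $L_2$-error bounds of both projections on $H^1$ data. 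Granting $H^1$-stability, the $s=1$ estimates follow from the identity $(I-P_h)w=(I-P_h)(w-\chi)$ for any $\chi\in X_h$: taking $\chi=I_hw$ and applying the bound $\|I-P_h\|_{H^1\to H^1}\le C$ gives $\|(I-P_h)w\|_{H^1}\le C\|w-I_hw\|_{H^1}$, which is $\le Ch\|w\|_{H^2}$ for $w\in H^2$ and $\le C\|w\|_{H^1}$ for $w\in H^1_0$.

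Having both endpoints, I would obtain the full range $s\in[0,1]$ by interpolation: viewing $w\mapsto (I-P_h)w$ as a bounded operator from $H^2(\Om)\cap H^1_0(\Om)$ into $L_2(\Om)$ (norm $\le Ch^2$) and into $H^1(\Om)$ (norm $\le Ch$), interpolation on the Hilbert scale $\{\dot H^s\}$ of \eqref{norm-s} converts these into the bound $Ch^{2-s}$ for the target norm $H^s$; the second inequality is identical from the endpoints $Ch^1$ and $Ch^0$. Finally, for stability on $H^s$ with $s\in[-1,0]$, I would use that $P_h$ is self-adjoint in the $L_2$-inner product together with the $L_2$-duality between $H^{-t}$ and $H^t$: for $t=-s\in[0,1]$,
\[
\|P_hw\|_{H^{-t}} = \sup_{0\ne\phi}\frac{(P_hw,\phi)}{\|\phi\|_{H^t}} = \sup_{0\ne\phi}\frac{(w,P_h\phi)}{\|\phi\|_{H^t}} \le \|w\|_{H^{-t}}\,\sup_{0\ne\phi}\frac{\|P_h\phi\|_{H^t}}{\|\phi\|_{H^t}} \le C\|w\|_{H^{-t}},
\]
where the last bound is the $H^t$-stability of $P_h$ for $t\in[0,1]$ (the $t=0$ case trivial, $t=1$ the step above, intermediate $t$ by interpolation). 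The main obstacle is thus the $H^1$-stability of $P_h$; everything else is assembly via best approximation, interpolation, and duality.
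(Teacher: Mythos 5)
Your proposal is correct and follows essentially the same route as the paper's proof: endpoint estimates at $s=0$ and $s=1$ via comparison with the Lagrange and Cl\'ement/Scott--Zhang interpolants, operator interpolation on the Hilbert scale for intermediate $s$, and self-adjointness of $P_h$ plus duality for the stability on $H^s$ with $s\in[-1,0]$. The only substantive difference is that you prove the key $H^1$-stability of $P_h$ yourself (comparing with the Ritz projection, using the inverse inequality and the bound $\|w-R_hw\|\le Ch\|w\|_{H^1}$, which rests on Aubin--Nitsche duality and hence on the convexity of $\Omega$ that the paper assumes anyway), whereas the paper simply cites \cite{BrambleXu:1991} for this fact; both are valid under the stated hypotheses.
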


\begin{proof}
Since the mesh is quasi-uniform,
the $L_2$--projection operator $P_h$ is stable in $H^1_0(\Omega)$
\cite{BrambleXu:1991}.  This immediately implies its stability in
$H^{-1}(\Omega)$.  Thus, stability on $H^{-s}(\Omega)$ follows from
this, the trivial stability of $P_h$ on $L_2(\Omega)$ and interpolation.

Let $I_h$ be the finite element interpolation operator and $C_h$
be the Clement or Scott-Zhang interpolation operator. 
It follows from the  stability of $P_h$ in $L_2(\Omega)$ and $H^1_0(\Omega)$
that
$$\begin{aligned}
\|(I-P_h)w \|_{L_2} &\le \|(I-I_h)w \|_{L_2}\le
 Ch^2 \|w\|_{H^2},\Forall w\in
H^2(\Omega)\cap H^1_0(\Omega),\\
\|(I-P_h)w \|_{H^1} &\le C\|(I-I_h)w \|_{H^1} \le Ch \|w\|_{H^2},\Forall w\in
H^2(\Omega)\cap H^1_0(\Omega),\\
\|(I-P_h)w \|_{L_2} &\le \|(I-C_h)w \|_{L_2} \le Ch \|w\|_{H^1},\Forall w\in
H^1_0(\Omega),\\
\|(I-P_h)w \|_{H^1} &\le C \|w\|_{H^1},\Forall w\in
H^1_0(\Omega).
\end{aligned}
$$
The inequalities of the lemma follow by interpolation.
\end{proof}

\begin{remark}  All the norms
appearing in Lemma \ref{lem:Ph-0stable} can be
  replaced by their corresponding  equivalent dotted norms.
\end{remark}

\section{Galerkin finite element method}\label{sec:Galerkin}

To motivate our study we shall first consider the 1-D case, i.e.,  $\Ellipt u=-u''$,
and take initial data the Dirac $\delta$-function at $x=\frac{1}{2}$,
$\langle \delta,v \rangle = v(\frac{1}{2})$.
It is well known that $H_0^{\frac{1}{2}+\epsilon}(0,1)$ embeds continuously
into $C_0(0,1)$, hence the $\delta$-function is a bounded linear functional on
the space $H_0^{\frac{1}{2}+\epsilon}(\Omega)$, i.e.,  $ \delta \in H^{-\frac{1}{2}-\epsilon}(\Omega)$.

In Tables \ref{tab:GalerkinDel-2} and \ref{tab:LumpedDel-1} we show the
error and the convergence rates for the
semidiscrete Galerkin FEM and semidiscrete lumped mass FEM (cf. Section 4)
for initial data $v$ being a Dirac $\delta$-function at $x=\frac{1}{2}$.
The results suggest an $O(h^\frac{1}{2})$ and $O(h^\frac{3}{2})$ convergence
rate for the $H^1$- and $L_2$-norm of the error, respectively.
Below we prove that up to a factor $|\ln h|$ for fixed $t>0$,
the convergence rate is of the order reported in Tables  \ref{tab:GalerkinDel-2}
and \ref{tab:LumpedDel-1}. In Table \ref{tab:GalerkinDel-1} we show the
results for the case that the $\delta$-function is supported at a grid point.
In this case the standard Galerkin method converges at the expected rate in $H^1$-norm, while
the convergence rate in the $L_2$-norm is $O(h^2)$.
This is attributed to the fact that in 1-D
the solution with the $\delta$-function as the initial data
is smooth from both sides of the support point and the finite element spaces
have good approximation property.
\begin{table}[!ht]
\caption{Standard FEM with initial data $\delta(\frac12)$ for $h=1/(2^k+1)$, $\al=0.5$.}
\label{tab:GalerkinDel-2}
\begin{center}
     \begin{tabular}{|c|c|c|c|c|c|c|c |c|}
     \hline
      time & $k$ & $3$ & $4$ & $5$ &$6$ & $7$ & ratio & rate\\     
     \hline
     $t=0.005$ & $L_2$-norm & 3.95e-2 & 1.59e-2 & 6.00e-3 & 2.19e-3 & 7.89e-4 &  $\approx 2.75$ & $O(h^\frac32)$ \\
     \cline{2-9}
     & $H^1$-norm           & 1.21e0 & 8.99e-1 & 6.52e-1 & 4.66e-1 & 3.33e-1 & $\approx 1.40$ & $O(h^\frac12)$ \\
     \hline
     $t=0.01$ & $L_2$-norm  & 2.85e-2 & 1.13e-2 & 4.26e-3 & 1.55e-3 & 5.58e-4 & $\approx 2.77$  & $O(h^\frac32)$ \\
     \cline{2-9}
     & $H^1$-norm           & 8.66e-1 & 6.39e-1 & 4.62e-1 & 3.31e-1 & 2.35e-1 & $\approx 1.40$  & $O(h^\frac12)$ \\
     \hline
     $t=1$ & $L_2$-norm     & 3.04e-3 & 1.17e-3 & 4.34e-4 & 1.57e-4 & 5.61e-5 & $\approx 2.79$ & $O(h^\frac32)$ \\
     \cline{2-9}
     & $H^1$-norm           & 8.91e-2 & 6.49e-2 & 4.66e-2 & 3.32e-2 & 2.36e-2 & $\approx 1.41$ & $O(h^\frac12)$ \\
     \hline
     \end{tabular}
\end{center}
\end{table}
\begin{table}[!ht]
\caption{Lumped mass FEM with initial data $\delta(\frac12)$, $h=1/2^k$ $\al=0.5$.}
\label{tab:LumpedDel-1}
\begin{center}
     \begin{tabular}{|c|c|c|c|c|c|c|c |c|}
     \hline
      time & $k$ & $3$ & $4$ & $5$ &$6$ & $7$ & ratio & rate \\     
     \hline
     $t=0.005$ & $L_2$-norm & 7.24e-2 & 2.66e-2 & 9.54e-3 & 3.40e-3 & 1.21e-3 & $\approx 2.79$ & $O(h^\frac32)$ \\
     \cline{2-9}
     & $H^1$-norm           & 1.51e0 & 1.07e0 & 7.60e-1 & 5.40e-1 & 3.81e-1 & $\approx 1.41$  & $O(h^\frac12)$ \\ 
     \hline
     $t=0.01$ & $L_2$-norm  & 5.20e-2 & 1.89e-2 & 6.77e-3 & 2.40e-3 & 8.54e-4 & $\approx 2.79$ & $O(h^\frac32)$ \\ 
     \cline{2-9}
     & $H^1$-norm           & 1.07e0 & 7.59e-1 & 5.37e-1 & 3.80e-1 & 2.70e-1 & $\approx 1.41$ & $O(h^\frac12)$ \\
     \hline
     $t=1$ & $L_2$-norm     & 5.47e-3 & 1.93e-3 & 6.84e-4 & 2.42e-4 & 8.56e-5 & $\approx 2.79$ & $O(h^\frac32)$ \\ 
     \cline{2-9}
     & $H^1$-norm           & 1.07e-1 & 7.58e-2 & 5.37e-2 & 3.80e-2 & 2.70e-2 & $\approx 1.41$ & $O(h^\frac12)$ \\ 
     \hline
     \end{tabular}
\end{center}
\end{table}

\begin{table}[!ht]
\caption{Standard semidiscrete FEM with initial data $\delta(\frac12)$, $h=1/2^k$, $\al=0.5$.}
\label{tab:GalerkinDel-1}
\begin{center}
     \begin{tabular}{|c|c|c|c|c|c|c|c|c|}
     \hline
     Time& $k$& $3$ & $4$ &$5$ &$6$ & $7$ & ratio &  rate \\
     \hline
     $t=0.005$ & $L_2$-norm  & 5.13e-3 & 1.28e-3 & 3.21e-4 & 8.03e-5 & 2.01e-5 & $\approx 3.99$ & $O(h^2)$ \\
     \cline{2-9}
     & $H^1$-norm           & 4.29e-1 & 3.09e-1 & 2.21e-1 & 1.56e-1 & 1.11e-1 & $\approx 1.41$ & $O(h^\frac12)$ \\ 
     \hline
     $t=0.01$ & $L_2$-norm  & 3.07e-3 & 7.70e-4 & 1.93e-4 & 4.82e-5 & 1.21e-5 & $\approx 3.98$ & $O(h^2)$ \\ 
     \cline{2-9}
     & $H^1$-norm           & 3.04e-1 & 2.19e-1 & 1.56e-1 & 1.11e-1 & 7.87e-2 & $\approx 1.41$ & $O(h^\frac12)$ \\
     \hline
      $t=1$ & $L_2$-norm     & 1.44e-5 & 2.64e-6 & 6.66e-7 & 1.69e-7 & 4.30e-8 & $\approx 3.94$ & $O(h^2)$ \\
     \cline{2-9}
     & $H^1$-norm           &  3.15e-2 &  2.23e-2 &  1.58e-2 & 1.11e-2 & 7.81e-3 & $\approx 1.41$ & $O(h^\frac12)$ \\
     \hline
     \end{tabular}
\end{center}
\end{table}
The numerical results in Tables \ref{tab:GalerkinDel-2}--\ref{tab:GalerkinDel-1}
motivate our study on the convergence rates of the semidiscrete Galerkin and lumped mass
schemes for initial data $v \in H^{-s}(\Omega)$, $0\le s \le 1$.

\begin{theorem}\label{Galerkin}
Let $u$  and $u_h$ be the solutions of \eqref{eq1} and the semidiscrete
Galerkin finite element method \eqref{fem} with $v_h=P_hv$, respectively.
Then there is a constant $C>0$ such that for $0 \le s \le 1$
\begin{equation}\label{main-est}
 \| u_h(t) - u(t) \| + h  \|\nabla(u_h(t) - u(t))\|
   \le C h^{2-s} \, \ell_h \,t^{-\al}|v|_{-s}.
\end{equation}
\end{theorem}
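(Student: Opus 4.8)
The plan is to follow Thom\'ee's error-splitting strategy, replacing the exponential smoothing of the heat semigroup by the Mittag--Leffler smoothing quantified in Theorem~\ref{thm:fdereg}. Introduce the Ritz projection $R_h\colon H^1_0(\Omega)\to X_h$ defined by $a(R_hw-w,\chi)=0$ for all $\chi\in X_h$, the discrete operator $A_h\colon X_h\to X_h$ given by $(A_h\psi,\chi)=a(\psi,\chi)$, and the discrete solution operators $E_h(t),\bar E_h(t)$ defined exactly as in \eqref{E-oper} and \eqref{Duhamel} but with the eigenpairs of $A_h$ in place of those of $\Ellipt$, so that $u_h(t)=E_h(t)P_hv$. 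For fixed $t>0$ I split
$$
u_h(t)-u(t)=\big(u_h(t)-R_hu(t)\big)+\big(R_hu(t)-u(t)\big)=:\theta(t)+\rho(t).
$$

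The term $\rho$ is the Ritz error of the exact solution, which is smooth for $t>0$, and it carries no logarithm. The standard estimate $\|\rho\|+h\|\nabla\rho\|\le Ch^{p}|u(t)|_{p}$ for $1\le p\le2$ (obtained by interpolating the $H^1$- and $H^2$-regular Ritz bounds), combined with the smoothing bound $|u(t)|_{p}\le Ct^{-(p+s)\alpha/2}|v|_{-s}$ of Theorem~\ref{thm:fdereg}(b), yields the claimed order once I choose $p=2-s$: this is admissible since $0\le s\le1$ forces $1\le p\le2$ and makes $p+s=2$, so the temporal exponent collapses to exactly $-\alpha$ and
$$
\|\rho(t)\|+h\|\nabla\rho(t)\|\le Ch^{2-s}|u(t)|_{2-s}\le Ch^{2-s}t^{-\alpha}|v|_{-s}.
$$
Interpolating the spatial order to $2-s$ is precisely what matches the available regularity so that no extra negative power of $t$ is incurred.

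For $\theta\in X_h$, subtracting the weak forms \eqref{weak} and \eqref{fem} (with $f=0$), using $a(R_hu,\chi)=a(u,\chi)$ and the identity $a(u,\chi)=-(\Dal u,\chi)$, shows that $\theta$ solves the discrete fractional equation $\Dal\theta+A_h\theta=-P_h\Dal\rho$ with $\theta(0)=P_hv-R_hv$, hence by the discrete analogue of \eqref{represent},
$$
\theta(t)=E_h(t)(P_hv-R_hv)-\int_0^t\bar E_h(t-\tau)\,P_h\Dal\rho(\tau)\,d\tau.
$$
The initial term is controlled by the $L_2$-smoothing of $E_h$ together with the projection-comparison and approximation bounds of Lemma~\ref{lem:Ph-0stable}, again interpolating the spatial order to $2-s$ to recover the factor $t^{-\alpha}$. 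The convolution is the crux: because $\Dal$ is nonlocal, one cannot bound $\|P_h\Dal\rho\|=\|P_h(R_h-I)\Dal u\|$ pointwise, since $\Dal u$ lacks the $H^{2-s}$-regularity of $u$ (each application of $\Dal$ costs two powers of $\lambda_j$). Instead I would measure the source in a weaker norm and exploit the gain of $\bar E_h$: the discrete analogues of Theorem~\ref{thm:fdereg}, valid uniformly in $h$ because $A_h$ is symmetric positive definite with eigenvalues bounded below, give kernel bounds of the form $\|\bar E_h(t-\tau)\psi\|\le C(t-\tau)^{\alpha-1}\|\psi\|$ and $\|\bar E_h(t-\tau)\psi\|\le C(t-\tau)^{\alpha/2-1}|\psi|_{-1}$, which trade source regularity against temporal smoothing.

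The hard part will be making this convolution estimate sharp. After inserting the kernel bound together with the time-dependent bound on the Ritz error of $\Dal u$, one is left with a time integral whose integrand, at the spatial order $2-s$, sits exactly on the borderline of integrability: splitting it at $\tau=t/2$ and at the mesh-dependent scale $\tau\sim h^{2/\alpha}$, where the two regimes of the smoothing estimate balance (the largest relevant discrete eigenvalue being $\lambda\sim h^{-2}$), produces a single factor $\int_{h^{2/\alpha}}\tau^{-1}\,d\tau\sim|\ln h|=\ell_h$, while the remaining powers of $\tau$ and $t-\tau$ integrate to the uniform $t^{-\alpha}$. Isolating exactly one power of $|\ln h|$, and checking the constant does not degenerate as $s\to0$ or $s\to1$, is the main technical obstacle. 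The $H^1$ bound follows the same scheme with the kernel $\|A_h^{1/2}\bar E_h(t-\tau)\psi\|$ and one power of $h$ traded for a power of $A_h^{1/2}$. Since $E(t)-E_h(t)P_h$ is linear, an alternative organization is to establish the endpoints $s=0$ (directly) and $s=1$ (by a duality argument reducing to $s=0$) and then interpolate on the Hilbert scale $\{\dot H^{-s}\}$, the logarithm being inherited from the endpoint estimates.
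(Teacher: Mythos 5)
There are two genuine gaps here, both at exactly the places your sketch leans hardest. First, the splitting itself is ill-posed for the data under consideration: for $v\in \dot H^{-s}(\Omega)$ with $0<s\le 1$ (indeed already for $v\in L_2(\Omega)$) the Ritz projection $R_hv$ is undefined, since $a(v,\chi)$ requires $v\in H^1_0(\Omega)$; hence $\theta(0)=P_hv-R_hv$ and the term $E_h(t)(P_hv-R_hv)$ have no meaning. Regularizing does not help: for $v_n\in H^1_0(\Omega)$ approximating $v$ one only has $\|P_h(I-R_h)v_n\|\le Ch|v_n|_1$, and since $(P_h-R_h)v_n$ already lies in $X_h$, no amount of discrete smoothing of $E_h(t)$ (capped at two orders of regularity, as in the discrete analogue of \eqref{smooth-h-2}) converts $|v_n|_1$ into $|v|_{-s}$, which blows up in the limit. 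This is precisely why nonsmooth-data theory does not run the $\theta/\rho$ split from $t=0$. The paper sidesteps the issue by comparing $u_h$ not with $R_hu$ but with the auxiliary \emph{continuous} solution $u^h(t)=E(t)P_hv$ of \eqref{weak-ID}, whose initial datum $P_hv$ is an $L_2$ (indeed $X_h$) function.

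Second, the convolution estimate you defer as ``the main technical obstacle'' is not technical polish; with the tools you list it cannot close when $\alpha$ is near $1$. By Theorem~\ref{thm:fdereg} one has $|\Dal u(\tau)|_p\le C\tau^{-\alpha(1+(p+s)/2)}|v|_{-s}$: each Caputo derivative costs a full $\tau^{-\alpha}$, while the Ritz error gains powers of $h$ only from positive regularity ($\|(I-R_h)w\|\le Ch^{p}|w|_p$ for $1\le p\le2$, plus at most one extra power by duality). Pairing any such bound $|P_h\Dal\rho(\tau)|_{-q}\le Ch^{q+p}\,\tau^{-\alpha(1+(p+s)/2)}|v|_{-s}$ with the kernel bound $\|\bar E_h(t-\tau)\psi\|\le C(t-\tau)^{\alpha-1-q\alpha/2}|\psi|_{-q}$, $0\le q\le 2$, gives an integrable singularity at $\tau=0$ only if $\alpha\bigl(1+(p+s)/2\bigr)<1$, i.e.\ $p<2(1-\alpha)/\alpha-s$, which is vacuous for $\alpha$ close to $1$; your cutoff at $\tau\sim h^{2/\alpha}$ produces a log in the mid-range but does not repair the endpoint $\tau\to0$, where the only bounds with an integrable singularity (e.g.\ via $|\Dal u(\tau)|_{-s}\le C\tau^{-\alpha}|v|_{-s}$) carry no positive power of $h$ at all. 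Even in the borderline heat case $\alpha=1$ this Duhamel route fails for nonsmooth data, and the time-weighted energy arguments that rescue it there do not transfer to the Caputo derivative (no product or chain rule, no semigroup property). The paper needs no such estimate: it invokes the known $L_2$-data bound $\|u_h(t)-u^h(t)\|+h\|\nabla(u_h(t)-u^h(t))\|\le Ch^2\ell_h t^{-\alpha}\|P_hv\|$ from \cite{Bangti_LZ_2012} (where the logarithm already lives), converts $\|P_hv\|\le Ch^{-s}\|P_hv\|_{-s}\le Ch^{-s}\|v\|_{-s}$ by the inverse inequality and the $H^{-s}$-stability of $P_h$ (Lemma~\ref{lem:Ph-0stable}), and bounds $u-u^h=E(t)(I-P_h)v$ by a log-free duality-plus-smoothing argument. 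Your closing alternative --- prove the endpoints $s=0$ and $s=1$ and interpolate on the scale $\{\dot H^{-s}\}$ --- is in fact viable ($s=0$ is the cited theorem, and $s=1$ follows by the duality you mention) and is much closer in spirit to what the paper actually does; the primary argument as written, however, does not go through.
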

\begin{remark}
 Note that for any fixed $\epsilon$ there is a $C_\epsilon >0$ such that
$|\delta |_{-\frac{1}{2}-\epsilon} \le C_\epsilon$. Thus, modulo the factor $\ell_h=|\ln h|$,
the theorem confirms the computational results of Table \ref{tab:GalerkinDel-2},
namely convergence in the $L_2$--norm with a rate $O(h^\frac32)$ and in
$H^1$--norm with a rate $O(h^\frac12)$.
\end{remark}
\begin{proof}

We shall need the following auxiliary problem: find $u^h(t) \in H^1_0(\Omega)$, s.t.
\begin{equation}\label{weak-ID}
\begin{split}
 {( \Dal u^h(t),\chi)}+ a(u^h(t),\chi)&= {(f(t), \chi)},
~~ \forall \chi\in \V, ~t >0,
~u^h(0)=P_h v.
\end{split}
\end{equation}
We note that the initial data $u^h(0)=P_hv\in H^1_0(\Omega)$ is smooth.

Now we consider the semidiscrete Galerkin method for problem \eqref{weak-ID},
i.e., equation \eqref{fem} with $v_h =P_hv$.
By Theorem 3.2 of \cite{Bangti_LZ_2012} we have
\begin{equation}
\begin{split}
 \| u_h(t) - u^h(t) \| + h  \|\nabla(u_h(t) - u^h(t))\|
 &  \le C h^{2} \, \ell_h \,t^{-\al} \|P_h v \|.
\end{split}
\end{equation}
Now, using the inverse inequality
$\|P_h v\| \le C h^{-s} \|P_h v\|_{-s}$, for $0\le s \le 1$,
and the stability of $P_h$ in $H^{-s}(\Omega)$ (cf. Lemma \ref{lem:Ph-0stable}),
we get
\begin{equation}\label{joe-est}
 \| u_h(t) - u^h(t) \| + h  \|\nabla(u_h(t) - u^h(t))\|
   \le C h^{2-s} \, \ell_h \,t^{-\al} \|v \|_{-s}.
\end{equation}

Now we estimate $u(t) - u^h(t)=E(t)(v - P_h v)$. To this end,
let $\{v_n\}\subset L_2(\Omega)$ be a sequence converging to $v$ in
$H^{-s}(\Omega)$. Noting that the operators $P_h$
and $E(t)$ are self-adjoint in $(\cdot,\cdot)$ and using
the smoothing property \eqref{smooth-h-2}  of $E(t)$
with $\ell =0$, $q=0$ and $p=2$,  we obtain for any $\phi\in L_2(\Omega)$
\begin{equation*}
\begin{aligned}
 | (E(t)(I- P_h)v_n,\phi)|&=  |( v_n,(I-P_h)E(t)\phi)|
                    \le |v_n|_{-s} |(I-P_h)E(t)\phi|_s\\
                    &\le C h^{2-s} |v_n|_{-s} |E(t)\phi|_2
                     \le C  h^{2-s}t^{-\al} |v_n|_{-s}\|\phi\|.
\end{aligned}
\end{equation*}
Taking the limit as $n$ tends to infinity gives
\begin{equation}\label{L2-error}
\begin{aligned}
\| u(t) - u^h(t) \| &= \sup_{\phi \in L_2(\Omega)} \frac{|(E(t)(I-P_h)v,\phi)|}{\| \phi \|} \le C  h^{2-s}t^{-\al} |v|_{-s}.
\end{aligned}
\end{equation}
Then by the triangle inequality we arrive at the $L_2$-estimate in \eqref{main-est}.

Next, for the gradient term $\|\nabla(u(t)-u^h(t))\|$, we observe that
for any $\phi\in \dot H^1(\Omega)$, by the coercivity of
$a(\cdot,\cdot)$, we have
\begin{equation}
\begin{aligned}
C_0 \|\nabla(E(t) (I-P_h) v_n) \|^2 &\le a( E(t) (I-P_h) v_n,E(t) (I-P_h) v_n)\\
&\le \sup_{\phi\in H^1_0(\Omega)}  \frac{ a( E(t) (I-P_h) v_n,\phi)^2}
{a(\phi,\phi)}.
\end{aligned}
\label{intermed}
\end{equation}
Meanwhile we have
$$\begin{aligned}
|a( E(t) (I-P_h) v_n,\phi)|&=  |((I-P_h) v_n,E(t)\Ellipt \phi)|
=| ( v_n,(I-P_h)E(t)\Ellipt \phi)|\\
&\le C|v_n|_{-s} |(I-P_h)E(t)\Ellipt \phi|_{s}
\le C h^{1-s} |v_n|_{-s} |E(t)\Ellipt \phi|_1\\
&\le C  h^{1-s} t^{-\al}|v_n|_{-s} |\Ellipt \phi|_{-1}
                    \le C  h^{1-s}t^{-\al} |v_n|_{-s} |\phi|_1.
\end{aligned}
$$
Passing to the limit as $n$ tends to infinity
and combining with \eqref{intermed} gives
\begin{equation}\label{H1-error}
\|\nabla  (u(t) - u^h(t))\|\le C  h^{1-s}t^{-\al} |v|_{-s}.
\end{equation}
Thus, \eqref{L2-error} and \eqref{H1-error} lead to the following estimate for $0\le s \le 1$:
\begin{equation}\label{auxiliary}
 \| u(t) - u^h(t) \| + h \|\nabla  (u(t) - u^h(t))\|
\le C  h^{2-s}t^{-\al} |v|_{-s}.
\end{equation}
Finally,
\eqref{joe-est}, \eqref{auxiliary}, and the triangle inequality give the desired estimate \eqref{main-est}
 and this completes the proof.
\end{proof}

\section{Lumped mass method}\label{sec:Lumpedmass}
%
In this section, we consider the lumped mass FEM in planar domains (see, e.g. \cite[Chapter 15, pp.
239--244]{Thomee97}).
An important feature of the lumped mass method is that when representing the solution
$\luh$ in the nodal basis functions,
the mass matrix is diagonal. This leads to a simplified computational procedure.
For completeness we shall briefly describe this approximation. Let $z^\tau_j
$, $j=1,\dots,d+1$ be the vertices of the $d$-simplex $\tau \in \T_h$.
Consider the following quadrature formula and the induced inner product in $X_h$:
\begin{equation*}
Q_{\K,h}(f) = \frac{|\tau|}{d+1} \sum_{j=1}^{d+1} f(z^\tau_j) \approx \int_\tau f dx, \quad
(w, \chi)_h = \sum_{\tau \in \T_h}  Q_{\tau,h}(w \chi)
\end{equation*}
Then lumped mass finite element method is: find $ \luh (t)\in X_h$ such that
\begin{equation}\label{fem-lumped}
\begin{split}
 {(\Dal \luh, \chi)_h}+ a(\luh,\chi)&= (f, \chi)
\quad \forall \chi\in X_h,\ t >0, ~~~
\luh(0)=P_h v.
\end{split}
\end{equation}

To analyze this scheme we shall need the concept of \textit{symmetric} meshes. Given a vertex
$z\in \T_h$, the patch $\Pi_z$ consists of all finite elements having $z$ as a vertex.
A mesh $\T_h$ is said to be symmetric at the vertex $z$, if
$x \in \Pi_z$ implies $2z - x \in \Pi_z$, and
$\T_h$ is symmetric if it is symmetric at every interior vertex.

In \cite[Theorem 4.2]{Bangti_LZ_2012} it was shown
that if the mesh
is symmetric, then the lumped mass scheme \eqref{fem-lumped} for $f=0$
has an almost optimal convergence rate in $L_2$-norm for nonsmooth data $v\in L_2(\Omega)$.

Now we prove the main result concerning the lumped mass method:
\begin{theorem}\label{est-lumped-mass}
Let $u(t)$ and $\luh(t)$ be the solutions of the problems \eqref{eq1} and \eqref{fem-lumped},
respectively. Then for $t > 0$ the following error estimate is valid:
\begin{equation}\label{lumped-H1}
\| \luh(t) -u(t) \| +
\|\nabla( \luh(t) -u(t) )\|\le C h^{1-s}\ell_h t^{-
\al}\|v \|_{-s}, ~~~0 \le s \le 1.
\end{equation}
Moreover, if the mesh is symmetric then
\begin{equation}\label{lumped-L2}
 \| \luh(t) -u(t) \| \le C h^{2-s}\ell_h t^{-\al}\|v \|_{-s}, ~~~0 \le s \le 1.
\end{equation}
\end{theorem}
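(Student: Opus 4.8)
The plan is to follow the template of the proof of Theorem \ref{Galerkin}, splitting the error against the same auxiliary solution $u^h(t)$ defined in \eqref{weak-ID}, which solves the continuous problem with the smoothed datum $P_h v \in H^1_0(\Omega)$. Writing
$$\luh(t) - u(t) = \big(\luh(t) - u^h(t)\big) + \big(u^h(t) - u(t)\big),$$
I would reuse verbatim the bound \eqref{auxiliary} for the second bracket, since $u^h - u = E(t)(P_h v - v)$ is a purely continuous object that never sees the mass-lumping; it already supplies $C h^{2-s} t^{-\al}|v|_{-s}$ in $L_2$ and $C h^{1-s} t^{-\al}|v|_{-s}$ in the gradient seminorm (the norms $|\cdot|_{-s}$ and $\|\cdot\|_{-s}$ being equivalent). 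The whole burden therefore falls on the first bracket, where $\luh$ and $u^h$ share the smooth initial datum $P_h v$.

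For that bracket I would invoke the smooth-data convergence estimates for the lumped mass scheme established in \cite[Theorem 4.2]{Bangti_LZ_2012} and its gradient companion. These give, for $L_2$ initial data,
$$\|\luh(t) - u^h(t)\| + h\|\nabla(\luh(t) - u^h(t))\| \le C h\,\ell_h\, t^{-\al}\|P_h v\|$$
on an arbitrary quasi-uniform mesh, together with the sharpened $L_2$ bound
$$\|\luh(t) - u^h(t)\| \le C h^2\,\ell_h\, t^{-\al}\|P_h v\|$$
when the mesh is symmetric. The point is that the labels ``smooth'' and ``nonsmooth'' are relative: $P_h v$ is smooth enough to serve as $L_2$ data here, even though $v$ itself lies only in $H^{-s}$. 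To convert $\|P_h v\|$ into $\|v\|_{-s}$ I would apply exactly the two ingredients already used for the standard method, namely the inverse inequality $\|P_h v\| \le C h^{-s}\|P_h v\|_{-s}$ on the quasi-uniform mesh followed by the $H^{-s}$-stability of $P_h$ from Lemma \ref{lem:Ph-0stable}, so that $\|P_h v\| \le C h^{-s}\|v\|_{-s}$ for $0\le s\le 1$.

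This turns the general-mesh estimate into $C h^{1-s}\ell_h t^{-\al}\|v\|_{-s}$ in both norms and the symmetric-mesh estimate into $C h^{2-s}\ell_h t^{-\al}\|v\|_{-s}$ in $L_2$. Combining each with the matching piece of \eqref{auxiliary} via the triangle inequality, and absorbing the non-logarithmic terms into the $\ell_h=|\ln h|\ge 1$ factor, yields \eqref{lumped-H1} on a general mesh and \eqref{lumped-L2} on a symmetric mesh.

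The only genuinely nontrivial ingredient is the smooth-data lumped mass estimate itself, which I am importing from \cite{Bangti_LZ_2012}; its heart is controlling the quadrature defect $\varepsilon_h(\chi,\psi) = (\chi,\psi)_h - (\chi,\psi)$ between the mass-lumped and exact $L_2$ inner products. On a general mesh this defect is only $O(h)$ relative to the energy norm, which caps the $L_2$ rate at $O(h)$; the gain to $O(h^2)$ under symmetry comes from the cancellation of the local odd moments of the error on symmetric patches, and this is precisely where the symmetry hypothesis enters. Everything downstream of that estimate is the same duality-and-stability bookkeeping already carried out for the standard Galerkin method, so I expect no further obstacle.
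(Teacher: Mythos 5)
Your proposal is correct and follows essentially the same route as the paper's own proof: the identical splitting $\luh - u = (\luh - u^h) + (u^h - u)$ against the auxiliary solution of \eqref{weak-ID}, reuse of \eqref{auxiliary} for the continuous part, the smooth-data lumped mass estimates of \cite[Theorem 4.2]{Bangti_LZ_2012} for the discrete part, and the conversion $\|P_h v\| \le C h^{-s}\|v\|_{-s}$ via the inverse inequality and the $H^{-s}$-stability of $P_h$ from Lemma \ref{lem:Ph-0stable}. You even improve slightly on the paper's bookkeeping by citing \eqref{auxiliary} where the paper's proof mistakenly points to \eqref{joe-est}, and your closing remark correctly identifies the quadrature-error cancellation on symmetric patches as the substance of the imported lemma.
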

\begin{proof}
We split the error into $\luh(t) -u(t) = \luh(t) -u^h(t) + u^h(t) -u(t)$, where $ u^h(t) -u(t)$
was estimated in \eqref{auxiliary}.
The term $ \luh(t) -u^h(t)$ is the error of the lumped mass method for the auxiliary
problem \eqref{weak-ID}. Since the initial data $P_h v \in L_2(\Om)$,
we can apply known estimates on $\luh(t)-u^h(t)$ \cite[Theorem 4.2]{Bangti_LZ_2012}. Namely,\\
(a) If the  mesh is globally quasiuniform, then
$$
\| \luh(t) -u^h(t) \| + h \| \nabla (\luh(t) -u^h(t))\| \le C h t^{-\al} \ell_h \|P_h v\|;
$$
(b) If the mesh is symmetric, then
$$
\| \luh(t) -u^h(t) \|  \le C h^{2} t^{-\al} \ell_h \|P_h v\|.
$$
These two estimates, the inequality
$\|P_h v\| \le Ch^{-s} \|v\|_{-s}$, $0 \le s \le 1$,
and estimate \eqref{joe-est}
give the desired result. This completes the proof of the theorem.
\end{proof}
\begin{remark}
The $H^1$-estimate is almost optimal for any quasi-uniform meshes, while
the $L_2$-estimate is almost optimal for symmetric meshes. For the
standard parabolic equation with initial data $v\in L_2(\Omega)$, it was shown
in \cite{chatzipa-l-thomee-FV} that the lumped mass scheme can achieve at most
an $O(h^\frac32)$ convergence order in $L_2$-norm for some nonsymmetric meshes.
This rate is expected to hold for fractional order differential equations as well.
\end{remark}

\section{Numerical results}\label{sec:numerics}

Here we present numerical results in 2-D to verify the error estimates derived herein and \cite{Bangti_LZ_2012}.
The 2-D problem \eqref{eq1} is on the unit square $\Omega=(0,1)^2$ with $\Ellipt =-\Delta$.
We perform numerical tests on four different examples:
\begin{enumerate}
 \item[(a)]  Smooth initial data: $v(x,y)= x(1-x)y(1-y)$; in this case the initial data $v$ is
  in $H^2(\Om)\cap H_0^1(\Om)$, and the exact solution $u(x,t)$ can be represented by a rapidly
  converging Fourier series:
  \begin{equation*}
  u(x,t)= \sum^\infty_{n=1}\sum^\infty_{m=1}\frac{4 c_{n}c_{m}}{m^3n^3\pi^6}E_{\al,1}(-\la_{n,m} t^\al) \sin (n\pi x) \sin (m \pi y),
  \end{equation*}
where $\la_{n,m}=(n^2+m^2)\pi^2$,  and
$c_{l}=4\sin^2(l\pi/2) - l \pi \sin(l\pi)$, $l=m,n$.

 \item[(b)]  Initial data in $ H^1_0(\Omega)$ (case of intermediate smoothness):
 \begin{equation*}
    v(x) = (x-\tfrac12)(x-1)(y-\tfrac12)(y-1)\chi_{[\frac12,1]\times[\frac12,1]},\\
 \end{equation*}
where $ \chi_{[\frac12,1]\times[\frac12,1]}$ is the characteristic function of
$ [\frac12,1]\times[\frac12,1]$.

 \item[(c)]  Nonsmooth initial data: $ v(x) = \chi_{[\frac14,\frac34]\times[\frac14,\frac34]}$.

\item[(d)] Very weak data: $v=\delta_\Gamma $ 
with $\Gamma$ being the boundary of the square $[\frac14,\frac34]\times[\frac14,\frac34]$ with
$\langle \delta_\Gamma,\phi\rangle = \int_\Gamma \phi(s) ds$. One may view $(v,\chi)$ for $\chi \in X_h \subset
\dot H^{\frac12+\epsilon}(\Om)$ as duality pairing between the spaces
$H^{-\frac12-\epsilon}(\Om)$ and $\dot H^{\frac12+\epsilon}(\Om)$ for any $\epsilon >0$
so that 
$\delta_\Gamma \in H^{-\frac12-\epsilon}(\Omega)$.
Indeed, it follows from H\"{o}lder's inequality
\begin{equation*}
  \begin{aligned}
   \|\delta_\Gamma\|_{H^{-\frac{1}{2}-\epsilon}(\Omega)}&= \sup_{\phi \in \dot H^{\frac12+\epsilon}(\Omega)}
   \frac{|\int_\Gamma \phi(s)ds|}{\|\phi\|_{\frac12+\epsilon,\Omega}}
   \le  |\Gamma|^\frac12 \sup_{\phi \in \dot H^{\frac12+\epsilon} (\Omega)}\frac{\|\phi\|_{L_2(\Gamma)}}{\|\phi\|_{\frac12+\epsilon,\Omega}},
 \end{aligned}
 \end{equation*}
and the continuity of the trace operator
from $\dot H^{\frac{1}{2}+\epsilon}(\Omega)$ to $L_2(\Gamma)$. 
\end{enumerate}

The exact solution for each example can be expressed by an infinite series involving
the Mittag-Leffler function $E_{\alpha,1}(z)$. To accurately evaluate the Mittag-Leffler functions,
we employ the algorithm developed in \cite{Seybold:2008}.
To discretize the problem, we divide the unit interval $(0,1)$ into
$N=2^k$ equally spaced subintervals, with a mesh size $h=1/N$ so that $[0,1]^2$ is divided into $N^2$
small squares. We get a symmetric mesh for the domain $[0,1]^2$ by connecting the
diagonal of each small square. All the meshes we have used are symmetric and therefore
both semidiscrete Galerkin FEM and lumped mass FEM have the same theoretical accuracy. Unless otherwise specified,
we have used the lumped mass method.

To compute a reference (replacement of the exact) solution we have used two different numerical techniques
on very fine meshes.
The first is based on the exact representation of the semidiscrete lumped mass
solution $\bar{u}_h$ by
$$
\luh(t)= \sum_{n, m=1}^{N-1} E_{\al,1} (-\la^h_{n,m}t^\al)(v,\fy^h_{n,m})  \fy^h_{n,m},
$$
where $\fy^h_{n,m}(x,y) = 2 \sin(n\pi x) \sin(m\pi y)$, $n,m=1, \dots, N-1$,
with $x,y$ being grid points, are the discrete eigenfunctions and
$$
\la^h_{n,m}= \frac{4}{h^2} \left (\sin^2\frac{n\pi h}{2} + \sin^2\frac{m\pi h}{2} \right )
$$
are the corresponding eigenvalues.

The second numerical technique is based on fully discrete scheme, i.e.,
discretizing the time interval $[0,T]$ into $t_n=n\tau$,
$n=0,1,\dots$, with $\tau$ being the time step size, and then approximating the fractional derivative
$\Dal u(x,t_n)$ by finite difference \cite{LinXu:2007}: 
\begin{equation}\label{FDiff}
  \begin{aligned}
 \Dal u(x,t_n)
  &\approx\frac{1}{\Gamma(2-\alpha)}\sum_{j=0}^{n-1}b_j\frac{u(x,t_{n-j})-u(x,t_{n-j-1})}{\tau^\alpha},
  \end{aligned}
\end{equation}
where the weights $b_j=(j+1)^{1-\alpha}-j^{1-\alpha}$, $j=0,1,\ldots,n-1$.
This fully discrete solution is denoted by $U_h$.
Throughout, we have set $\tau=10^{-6}$ so
that the error incurred by temporal discretization is negligible (see Table \ref{tab:nonsmooth2D12} 
for an illustration).

We measure the accuracy of the approximation $u_h(t)$
by the normalized error $\|u(t)-u_h(t)\|/\|v\|$ and $\|\nabla (u(t)-u_h(t))\|/ \|v\|$.
The normalization enables us to observe the behavior of the error with respect to time in
case of nonsmooth initial data.
\paragraph{Smooth initial data: example (a).}
In Table \ref{tab:smooth2D1} we show the numerical results for $t=0.1$ and $\al=0.1,~0.5,~0.9$.
Here \texttt{ratio} refers to the ratio
between the errors as the mesh size $h$ is halved. In Figure \ref{fig:smooth2D}, we plot
the results from Table \ref{tab:smooth2D1} in a log-log scale. The slopes of the error
curves are $2$ and $1$, respectively, for $L_2$- and $H^1$-norm of the error.
This confirms the theoretical result from \cite{Bangti_LZ_2012}.
%
\begin{table}[h!]
\caption{Numerical results for smooth initial data, example (a), $t=0.1$.}
\label{tab:smooth2D1}
\begin{center}
     \begin{tabular}{|c|c|c|c|c|c|c|c|c|}
     \hline
     $\alpha$ & $h$ & $1/8$ & $1/16$ &$1/32$ &$1/64$ & $1/128$ & ratio & rate \\
     \hline
     $0.1$ & $L_2$-norm  & 9.25e-4 & 2.44e-4 & 6.25e-5 & 1.56e-5 & 3.85e-6 & $\approx 4.01$ & $O(h^2)$ \\
     \cline{2-9}
     & $H^1$-norm  & 3.27e-2 & 1.66e-2 & 8.40e-3 & 4.21e-3 & 2.11e-3 & $\approx 1.99$ & $O(h)$ \\
    \hline
     $0.5$ & $L_2$-norm  & 1.45e-3 & 3.84e-4& 9.78e-5 & 2.41e-5& 5.93e-6& $\approx 4.02$ & $O(h^2)$ \\
     \cline{2-9}
     & $H^1$-norm  & 5.17e-2 & 2.64e-2 & 1.33e-2 & 6.67e-3 & 3.33e-3 & $\approx 1.99$ & $O(h)$ \\
     \hline
     $0.9$ & $L_2$-norm  & 1.88e-3 & 4.53e-4 & 1.13e-4 & 2.82e-5 & 7.06e-6& $\approx 4.00$ & $O(h^2)$ \\
     \cline{2-9}
     & $H^1$-norm  & 6.79e-2 & 3.43e-2 & 1.73e-2 & 8.63e-3 & 4.31e-3 & $\approx 2.00$ & $O(h)$ \\
     \hline
     \end{tabular}
\end{center}
\end{table}
\begin{figure}[h!]
\center
  \includegraphics[width=11cm]{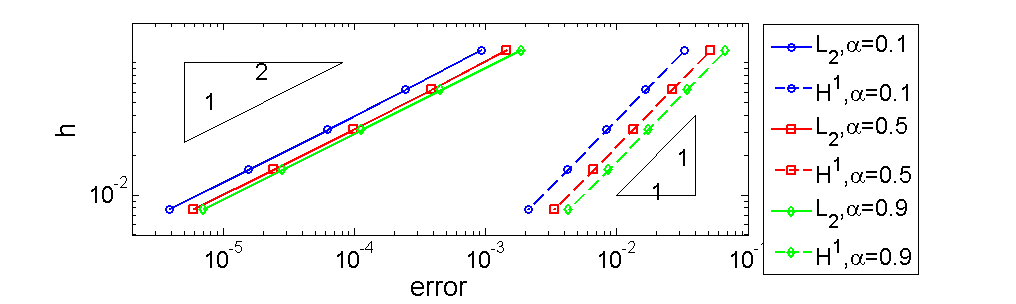}
  \caption{Error plots for smooth initial data, Example (a):
  $\al=0.1, 0.5, 0.9$ at $t=0.1$.}\label{fig:smooth2D}
\end{figure}
\paragraph{Intermediate smooth data: example (b).}
In this example the initial data $v(x)$ is in $H_0^1(\Om)$ and 
the numerical results are shown in Table \ref{tab:intermediate2D}
The slopes of the error curves in a log-log
scale are $2$ and $1$ respectively for $L_2$- and $H^1$-norm of the errors,
which agrees well with the theory for the intermediate case \cite{Bangti_LZ_2012}.
\begin{table}[h!]
\caption{Intermediate case (b) with $\al=0.5$ at $t=0.1$.}\label{tab:intermediate2D}
\begin{center}
     \begin{tabular}{|c|c|c|c|c|c|c|c|}
     \hline
     $h$ & $1/8$ & $1/16$ & $1/32$& $1/64$& $1/128$& ratio &  rate \\
     \hline
     $L_2$-error & 3.04e-3 & 8.20e-4 & 2.12e-4 & 5.35e-5 & 1.32e-5 & $\approx 3.97$ & $O(h^2)$ \\
     \hline
     $H^1$-error & 5.91e-2 & 3.09e-2 & 1.56e-2 & 7.88e-3 & 3.93e-3 & $\approx 1.98$ & $O(h)$\\
     \hline
     \end{tabular}
\end{center}
\end{table}
\paragraph{Nonsmooth initial data: example (c).}
First in Table \ref{tab:nonsmooth2D12} we compare
fully discrete solution $U_h$ via the finite difference approximation \eqref{FDiff} 
with the semidiscrete lumped mass solution $\bar{u}_h$ via eigenexpansion to study the error incurred by time discretization.
We observe that for each fixed spatial mesh size $h$, the difference between $\bar{u}_h$, the lumped mass FEM solution, and $U_h$
decreases with the decrease of $\tau$. In particular,
for time step $\tau=10^{-6}$ the error incurred by the time discretization is negligible, so the
fully discrete solutions $U_h$ could well be used as reference solutions.
\begin{table}[h!]
\caption{The difference $\bar{u}_h -U_h$, nonsmooth initial data, example (c): $\al=0.5$, $t=0.1$}
\label{tab:nonsmooth2D12}
\begin{center}
     \begin{tabular}{|c|c|c|c|c|c|c|}
     \hline
     Time step & $h$ & $1/8$ & $1/16$ &$1/32$ &$1/64$ & $1/128$  \\
     \hline
     $\tau=10^{-2}$ & $L_2$-norm  &2.03e-3  &2.01e-3  &2.00e-3  &2.00e-3  &2.00e-3  \\
     \cline{2-7}
     & $H^1$-norm  & 9.45e-3 & 9.17e-3  &9.10e-3  &9.08e-3  &9.07e-3 \\
    \hline
     $\tau=10^{-4}$ & $L_2$-norm  & 1.81e-5  &1.79e-5  &1.79e-5  &1.79e-5  &1.79e-5 \\
     \cline{2-7}
     & $H^1$-norm  & 8.47e-5 & 8.22e-5  &8.15e-5  &8.13e-5  &8.13e-5  \\
     \hline
     $\tau=10^{-6}$ & $L_2$-norm  & 1.80e-7  &1.78e-7  &1.78e-7  &1.78e-7  &1.78e-7\\
     \cline{2-7}
     & $H^1$-norm  & 8.42e-7  &8.17e-7  &8.10e-7 & 8.10e-7   &8.09e-7 \\
     \hline
     \end{tabular}
\end{center}
\end{table}
In Table \ref{tab:nonsmooth2D1} and Figure \ref{fig:nonsmooth2D}  we present the
numerical results for problem (c). 
These numerical results fully confirm the theoretically
predicted rates for nonsmooth initial data. 
%
\begin{figure}[h!]
\center
\includegraphics[width=11cm]{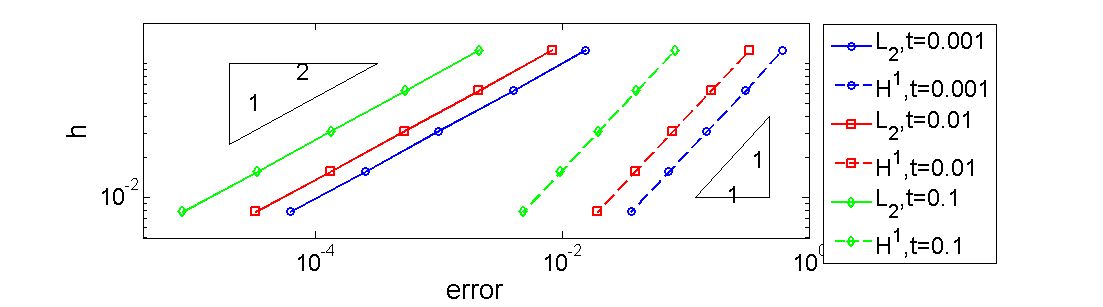}
\caption{Error plots for lumped FEM for nonsmooth initial data, Example (c):  $\al=0.5$.}
\label{fig:nonsmooth2D}
\end{figure}
\begin{table}[h!]
\caption{Error for the lumped FEM for nonsmooth initial data, example (c): $\al=0.5$} 
\label{tab:nonsmooth2D1}
\begin{center}
     \begin{tabular}{|c|c|c|c|c|c|c|c|c|}
     \hline
     Time& $h$ & $1/8$ & $1/16$ &$1/32$ &$1/64$ & $1/128$ & ratio &  rate \\
     \hline
     $t=0.001$ & $L_2$-norm  &1.55e-2  &3.99e-3  &1.00e-3  &2.52e-4  &6.26e-5 & $\approx 4.01$ &$O(h^2)$ \\ 
     \cline{2-9}
     & $H^1$-norm  & 6.05e-1 & 3.05e-1  &1.48e-1  &7.29e-2  &3.61e-2 & $\approx 2.00$ & $O(h)$ \\ 
    \hline
     $t=0.01$ & $L_2$-norm  & 8.27e-3  &2.10e-3  &5.28e-4  &1.32e-4  &3.29e-5& $\approx 4.01$ &$O(h^2)$ \\ 
     \cline{2-9}
     & $H^1$-norm  & 3.32e-1 & 1.61e-1  &7.90e-2  &3.90e-2  &1.93e-2 & $\approx 2.02$ & $O(h)$ \\ 
     \hline
     $t=0.1$ & $L_2$-norm  & 2.12e-3  &5.36e-4  &1.34e-4  &3.36e-5  &8.43e-6& $\approx 3.99$ &$O(h^2)$ \\ 
     \cline{2-9}
     & $H^1$-norm  & 8.23e-2  &4.01e-2  &1.96e-2 & 9.72e-3  &4.84e-3 & $\approx 2.01$ & $O(h)$ \\ 
     \hline
     \end{tabular}
\end{center}
\end{table}

\paragraph{Very weak data: example (d).}
The empirical convergence rate for the weak data $\delta_\Gamma$ agrees well with the theoretically
predicted convergence rate in Theorem \ref{Galerkin}, which gives
a ratio of $2.82$ and $1.41$, respectively, for the $L_2$- and $H^1$-norm of the error;
see Table \ref{tab:Deltafun}. Interestingly, for the standard Galerkin scheme, the
$L_2$-norm of the error exhibits super-convergence; see Table \ref{tab:Galerkinweak}.

\begin{table}[!ht]
\caption{Error for standard FEM: initial data Dirac $\delta$-function, $\al=0.5$}
\label{tab:Galerkinweak}
\begin{center}
     \begin{tabular}{|c|c|c|c|c|c|c|c|c|}
     \hline
     Time& $h$& $1/8$ & $1/16$ &$1/32$ &$1/64$ & $1/128$ & ratio &  rate\\
     \hline
     $t=0.001$ & $L_2$-norm  & 5.37e-2 & 1.56e-2 & 4.40e-3 & 1.23e-3 & 3.41e-4 & $\approx 3.57$ & $O(h^{1.84})$ \\
     \cline{2-9}
     & $H^1$-norm           & 2.68e0 & 1.76e0 & 1.20e0 & 8.21e-1 & 5.68e-1 &$\approx 1.45$ & $O(h^\frac12)$\\
     \hline
     $t=0.01$ & $L_2$-norm  & 2.26e-2 & 6.20e-3 & 1.67e-3 & 4.46e-4 & 1.19e-4 & $\approx 3.74$ &  $O(h^{1.90})$ \\
     \cline{2-9}
     & $H^1$-norm           & 9.36e-1 & 5.90e-1 & 3.92e-1 & 2.65e-1 & 1.84e-1 & $\approx 1.46$ &  $O(h^\frac12)$\\
     \hline
     $t=0.1$ & $L_2$-norm   & 8.33e-3 & 2.23e-3 & 5.90e-3 & 1.55e-3 & 4.10e-4 & $\approx 3.77 $ & $O(h^{1.91})$ \\
     \cline{2-9}
     & $H^1$-norm           & 3.08e-1 & 1.91e-1 & 1.26e-1 & 8.44e-2 & 5.83e-2 & $\approx 1.46$ &  $O(h^\frac12)$\\
     \hline
     \end{tabular}
\end{center}
\end{table}
\begin{table}[!ht]
\caption{Error for lumped mass FEM: initial data Dirac $\delta$-function, $\al=0.5$}
\label{tab:Deltafun}
\begin{center}
     \begin{tabular}{|c|c|c|c|c|c|c|c|c|}
     \hline
     Time& $h$& $1/8$ & $1/16$ &$1/32$ &$1/64$ & $1/128$ & ratio & rate \\
   \hline
     $t=0.001$ & $L_2$-norm  & 1.98e-1 & 7.95e-2 & 3.00e-2 & 1.09e-2 & 3.95e-3 & $\approx 2.75$&  $O(h^\frac32)$ \\
     \cline{2-9}
     & $H^1$-norm           & 5.56e0 & 4.06e0 & 2.83e0 & 2.02e0 & 1.41e0 &$\approx 1.42$ &  $O(h^\frac12)$\\
    \hline
    $t=0.01$ & $L_2$-norm  & 6.61e-2 & 2.56e-2& 9.51e-3 & 3.47e-3  & 1.25e-3 & $\approx 2.78$ &  $O(h^\frac32)$\\
    \cline{2-9}
    & $H^1$-norm           & 1.84e0 & 1.30e0 & 9.10e-1 & 6.40e-1 & 4.47e-1 & $\approx 1.42$ &  $O(h^\frac12)$\\
    \hline
      $t=0.1$ & $L_2$-norm  & 2.15e-2 & 8.13e-3 & 3.01e-3 & 1.09e-3 & 3.95e-4& $\approx 2.75$ &  $O(h^\frac32)$\\
     \cline{2-9}
     & $H^1$-norm  & 5.87e-1 & 4.14e-1 & 2.88e-1 & 2.03e-1 & 1.41e-1 & $\approx 1.43$ &  $O(h^\frac12)$\\
     \hline
     \end{tabular}
\end{center}
\end{table}
\begin{figure}[ht!]
\center
\includegraphics[width=11cm]{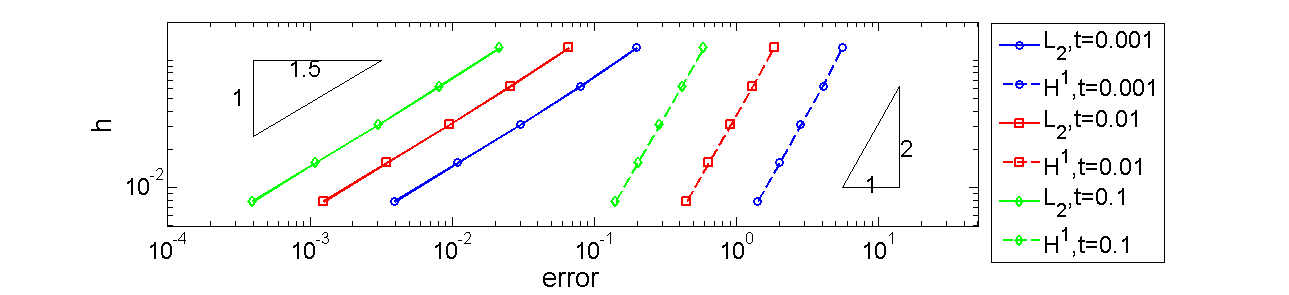}
\caption{Error plots for Example (d): initial data Dirac $\delta$-function, $\al=0.5$.}
\label{fig:weak2D}
\end{figure}

\bibliographystyle{abbrv}
\bibliography{frac}

\end{document}